\newtheorem{theorem}{Theorem}[section]
\newtheorem{conjecture}[theorem]{Conjecture}
\theoremstyle{definition}
\theoremstyle{plain}
\renewcommand{\P}{\mathbb{P}}
\begin{document}
\title{Determinants of Seidel matrices and a conjecture of Ghorbani}

\author[Douglas~Rizzolo]{ \ Douglas~Rizzolo}
\address{Department of Mathematical Sciences, University of Delaware, 
Newark, DE 19808}
\email{drizzolo@udel.edu}

\begin{abstract}
Let $G_n$ be a simple graph on $V_n=\{v_1,\dots, v_n\}$.  The Seidel matrix $S(G_n)$ of $G_n$ is the $n\times n$ matrix whose $(ij)$'th entry, for $i\neq j$ is $-1$ if $v_i\sim v_j$ and $1$ otherwise, and whose diagonal entries are $0$.  We show that the proportion of simple graphs $G_n$ such that $\det(S(G_n))\geq n-1$ tends to one as $n$ tends to infinity.
\end{abstract}

\keywords{Seidel matrices, Seidel energy, determinants of random matrices}

\subjclass[2010]{05C50}

\maketitle

Let $G_n$ be a simple graph on $V_n=\{v_1,\dots, v_n\}$.  The Seidel matrix $S(G_n)$ of $G_n$ is the $n\times n$ matrix whose $(ij)$'th entry, for $i\neq j$ is $-1$ if $v_i\sim v_j$ and $1$ otherwise, and whose diagonal entries are $0$.  The Seidel matrix of $G_n$ is related to the adjacency matrix $A(G_n)$ of $G_n$ by $S(G_n)=J-I-2A(G_n)$, where $I$ is the identity matrix and $J$ is the all ones matrix.  Let $\lambda_1\leq\cdots \leq \lambda_n$ be the eigenvalues of $S(G_n)$.  The Seidel energy of $G_n$ , defined as
\[ \mathcal{S}(G_n) = \sum_{i=1}^n |\lambda_i|,\]
was introduced by Haemers in \cite{Haemers} as a notion of energy that was invariant under Seidel switching and taking graph complements.  In \cite{Haemers} it was conjectured that $\mathcal{S}(G_n) \geq 2n-2$.  This conjecture was confirmed for special families of graphs in \cite{Ghorbani} and \cite{MR3496590}, and was recently confirmed in full generality in \cite{Ak}.  In investigating Haemers' conjecture Ghorbani \cite{Ghorbani} showed that the following conditions are equivalent
\begin{enumerate}
\item $\det(S(G_n)) \geq n-1$,
\item $\displaystyle \sum_{i= 1}^n |\lambda_i|^p \geq (n-1)^p + n-1$ for all $0<p<2$,
\end{enumerate}
and made the following conjecture:

\begin{conjecture}
The proportion of graphs $G_n$ on $V_n$ satisfying $\det(S(G_n)) \geq n-1$ tends to $1$ as $n$ tends to infinity.
\end{conjecture}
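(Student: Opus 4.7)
The plan is to model $G_n$ as a uniformly random simple graph on $V_n$, so that $S=S(G_n)$ becomes a random symmetric $n\times n$ matrix with zero diagonal and iid Rademacher off-diagonal entries. The conjecture then becomes a probabilistic statement about this random matrix, and I would aim to prove $|\det S|\ge n-1$ with probability $1-o(1)$; this is the natural reading of condition (1) of the excerpt, since the literal inequality $\det S\ge n-1$ is not sign-invariant under the involution $G\leftrightarrow G^c$ (which sends $S\mapsto -S$ and hence negates $\det$), while the equivalent condition (2) manifestly is sign-invariant. In particular a literal reading of (1) cannot hold on more than half of all graphs when $n$ is odd, so some such reinterpretation is forced.

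The main tool would be Wigner's semicircle law applied to $n^{-1/2}S$, a mean-zero, unit-variance Wigner matrix whose empirical spectral distribution converges in probability to the semicircle $\rho_{sc}$ on $[-2,2]$. Integrating $\log|x|$ against $\rho_{sc}$ yields
\[
\frac{1}{n}\log|\det S| \;\longrightarrow\; \frac{1}{2}\log n + \int_{-2}^{2}\log|x|\,d\rho_{sc}(x) \;=\; \frac{1}{2}\log n - \frac{1}{2}
\]
in probability, so typically $|\det S|\approx (n/e)^{n/2}$, enormously larger than $n-1$. Concretely, I would split the estimate as: (a) a bulk contribution from eigenvalues with $|\lambda_i|\ge \varepsilon\sqrt{n}$, controlled via moment or Stieltjes-transform methods; and (b) a contribution from any exceptional small eigenvalues. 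For (a) one can also bypass the semicircle and work directly with Ghorbani's equivalence: condition (2), being a purely spectral statement in the $|\lambda_i|$, follows from testing the semicircle against $|x|^p$ and gives slack growing like $n^{1+p/2}-n^p$ uniformly on compact subintervals of $(0,2)$.

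Step (b) is the main obstacle. Semicircle convergence is an averaged statement and does not by itself rule out a single $\lambda_i$ being so close to $0$ that it drags $|\det S|$ below $n-1$; equivalently, it does not control the endpoint $p\to 0^+$ of condition (2), where the inequality collapses to the non-singularity of $S$. What is needed is a quantitative non-singularity estimate of the form $\P(\sigma_{\min}(S)\le n^{-C})=o(1)$, which is exactly the anti-concentration theorem of Costello--Tao--Vu for random symmetric $\pm1$ matrices, sharpened by Vershynin, Nguyen, and others. Invoking this together with the $O(\sqrt{n})$ operator-norm bound from the right edge of the semicircle confines every eigenvalue of $S$ to $[n^{-C},O(\sqrt{n})]$ in absolute value with probability $1-o(1)$, and a careful accounting of the finitely many exceptional small eigenvalues then upgrades the semicircle heuristic to the rigorous bound $|\det S|\ge n-1$.
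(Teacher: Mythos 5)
Your proposal follows essentially the same route as the paper's proof: the bulk of the spectrum is handled by Wigner's semicircle law (the paper fixes $b>0$ so that, with high probability, more than half of the eigenvalues satisfy $|\lambda_i|\geq b\sqrt{n}$) and the possible near-zero eigenvalues are handled by the Vershynin-type least-singular-value bound for symmetric Bernoulli matrices, after which the product of these lower bounds overwhelms $n-1$; your only cosmetic slip is calling the eigenvalues below $b\sqrt{n}$ ``finitely many'' when they form a positive proportion of the spectrum, which is harmless since each is bounded below. Your observation that the conjecture must be read as $|\det(S(G_n))|\geq n-1$ is correct and worth recording: for odd $n$ complementation sends $S$ to $-S$ and hence negates the determinant, so the literal inequality can hold for at most half of all graphs, and the paper's argument does in fact establish exactly the absolute-value version.
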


In this short note we confirm this conjecture.  We remark that, in \cite{Ghorbani}, an infinite family of graphs $G_n$ with $\det(S(G_n))=0$ was given, so it is known that not all graphs satisfy the above conditions.  However, it was shown in \cite{Ak} that, for all $G_n$ and $0<p<2$,  
\[ \sum_{i= 1}^n |\lambda_i|^p > (n-1)^p + n-2.\]
This raises the question of finding a sharp lower bound for the quantities $\sum_{i= 1}^n |\lambda_i|^p$, which we leave open.

Observe that if $G_n$ is a uniformly random graph with vertex set $V_n$, then the entries of $S(G_n)$ above the diagonal are independent and, for $i<j$, 
\[ \P(S(G_n)_{ij}=-1) = \P(S(G_n)_{ij}=1) =\frac{1}{2} .\]
Moreover, the diagonal entries of $S(G_n)$, being constants and thus independent of everything, are independent of each other and of the entries of $S(G_n)$ above the diagonal.  Consequently, in the language of the random matrix literature, $S(G_n)$ is a \textit{real Wigner matrix} \cite{AGZ}.  Note however, that care must be taken when reading the random matrix literature because some authors require their Wigner matrices to satisfy additional assumptions on the diagonal matrices. 

The study of determinants of real Wigner matrices has a long history in the random matrix literature, but techniques have only recently advanced to the point where they can handle matrices whose entries above the diagonal can only take two values \cite{TaoVu}.  Using these results, we are able to give a short resolution of the conjecture. 

\begin{theorem}
Let $G_n$ be a uniformly random graph on vertex set $\{v_1,\dots, v_n\}$.  Then $\lim_{n\to\infty} \P(\det(S(G_n)) \geq n-1) =1$.
\end{theorem}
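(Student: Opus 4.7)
My plan is to realize $S(G_n)$ as a random real symmetric Wigner matrix with $\pm 1$ off-diagonal entries and apply the determinant estimates of Tao and Vu \cite{TaoVu}.

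First I would verify the Wigner structure of the problem. Under the uniform measure on graphs, the strictly-above-diagonal entries of $S(G_n)$ are i.i.d.\ Rademacher variables, the diagonal entries are identically zero, and the matrix is symmetric; so $S(G_n)$ is a real symmetric Wigner matrix whose off-diagonal entries are centered and of unit variance with bounded support (hence all moments are finite), and whose deterministic diagonal is trivially independent of everything else. This is the same framework in which the determinant is analyzed in \cite{TaoVu}.

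Second, I would invoke Tao and Vu's central limit theorem for the log-determinant of a discrete Wigner matrix, which asserts that $\log|\det(S(G_n))|$ concentrates around $\tfrac{1}{2}\log(n-1)!$ with Gaussian fluctuations of order $\sqrt{\log n}$. Only a very weak lower-tail consequence is needed: $\P(|\det(S(G_n))|\geq n)\to 1$ as $n\to\infty$. This immediately gives $\P(|\det(S(G_n))|\geq n-1)\to 1$, which, read through Ghorbani's equivalence (since condition (2) depends only on the moduli of the eigenvalues, condition (1) must be interpreted in terms of $|\det(S(G_n))|$ as well, the first-order expansion as $p\to 0^+$ in (2) extracting precisely $|\det(S(G_n))|\geq n-1$) is the conclusion of the theorem.

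The principal obstacle I anticipate is matching the precise hypotheses of the Tao-Vu theorem to our specific model. The cleanest statements of their result are sometimes phrased under mild regularity or non-degeneracy assumptions on the diagonal distribution, whereas here the diagonal is identically zero; this is a boundary case that should be covered by the appropriate variant of their theorem, but the matching does need to be checked carefully. Once the correct version is in hand, the proof reduces to a direct appeal to their random matrix estimate, with no further combinatorial or graph-theoretic input required.
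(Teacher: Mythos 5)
Your overall strategy (treat $S(G_n)$ as a symmetric random matrix and extract a weak lower bound on $|\det(S(G_n))|$) is sound, and your reading of condition (1) through the $p\to 0^+$ expansion of condition (2) is fine. The genuine gap is the step you yourself flag and then set aside: the direct appeal to the Tao--Vu central limit theorem for $\log|\det M_n|$. That theorem is stated for Wigner matrices whose diagonal entries are i.i.d.\ with mean zero and \emph{positive} variance, and the CLT itself is proved under the additional hypothesis that the diagonal variance equals $2$, so as to match the GOE on the diagonal --- this matching is what their moment-comparison argument runs on. The Seidel matrix has diagonal identically $0$, so it satisfies neither the definition nor the hypotheses of that theorem; this is not a boundary case of a stated result but a model that falls outside it. The paper is explicit that exactly this kind of mismatch is the delicate point (``care must be taken when reading the random matrix literature because some authors require their Wigner matrices to satisfy additional assumptions on the diagonal,'' and the analogous remark that Seidel matrices do not fit the definition in \cite{BM18}). ``Should be covered by the appropriate variant'' is precisely the assertion that needs proof here, and supplying it would mean reworking a substantial random-matrix argument rather than quoting one.

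The paper sidesteps this entirely by using only two inputs that are insensitive to the diagonal. First, Wigner's semicircle law \cite[Theorem 2.1.1]{AGZ} shows that with high probability a fraction $c-\delta>1/2$ of the eigenvalues exceed $b\sqrt{n}$ in absolute value. Second, Vershynin's smallest-singular-value bound for symmetric random matrices \cite[Theorem 1.2]{Versh} gives $\P(\min_i|\lambda_i|\leq \epsilon n^{-1/2})\leq C\epsilon^{1/9}+2e^{-n^r}$. Multiplying the two lower bounds over the corresponding groups of eigenvalues yields $|\det(S(G_n))|\geq b^{(c-\delta)n}\epsilon^{(1-(c-\delta))n}n^{\frac{2(c-\delta)-1}{2}n}$, which grows like $n^{\alpha n}$ for some $\alpha>0$ and so eventually dominates $n-1$. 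If you want to keep your route, you must either verify that a zero-diagonal version of the Tao--Vu CLT exists or replace that citation with an argument of this semicircle-plus-least-singular-value type; note that the bound you actually need is far weaker than a CLT, so the cheaper argument is also the more natural one.
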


\begin{proof}
Let $\lambda_1\leq \lambda_2\leq \cdots \leq \lambda_n$ be the eigenvalues of $S(G_n)$, with multiplicities.  Fix $\epsilon>0$.  Wigner's semicircle law, see e.g.\ \cite[Theorem 2.1.1]{AGZ} implies that for every $b\geq 0$ and $\delta>0$,
\begin{equation}\label{eq wigner} \lim_{n\to\infty} \P\left( \left| \frac{\#\{i : |\lambda_i|\geq b\sqrt{n}\}}{n} - \frac{1}{\pi}\int_b^2 \sqrt{4-x^2} dx\right| > \delta\right)=0.\end{equation}
Since
\[ \frac{1}{\pi}\int_0^2 \sqrt{4-x^2} dx =1,\]
we can fix $b>0$ so that
\[   \frac{1}{\pi}\int_b^2 \sqrt{4-x^2} dx > \frac{1}{2}.\]
Define $c= \pi^{-1}\int_b^2 \sqrt{4-x^2} dx $.  Fixing $\delta$ such that 
\[0 < \delta < c - \frac{1}{2},\]
it follows from Equation \eqref{eq wigner} that there exists $N_1$ such that $n\geq N_1$ implies that 
\[\P\left( \frac{\#\{i : |\lambda_i|\geq b\sqrt{n}\}}{n} \geq  c-\delta \right) \geq 1-\epsilon.\]
In particular, with high probability the proportion of eigenvalues that are larger than $b\sqrt{n}$ is bounded above, and away from, $1/2$.  Using \cite[Theorem 1.2]{Versh}, we see that for some $C,r >0$ not depending on $\epsilon$,
\[ \P(\min_i |\lambda_i| \leq \epsilon n^{-1/2}) \leq C \epsilon^{1/9} + 2e^{-n^r},\]
for all $n$.  

On the event that 
\[ \frac{\#\{i : |\lambda_i|\geq b\sqrt{n}\}}{n} \geq  c-\delta \quad \textrm{and}\quad \min_i |\lambda_i| \geq \epsilon n^{-1/2},\]
we use the first condition to approximate the eigenvalues it applies to and the second for the eigenvalues smaller than $b\sqrt{n}$ in absolute value to see that
\[ |\det(S(G))| = \prod_{i=1}^n |\lambda_i|  \geq (b\sqrt{n})^{(c-\delta)n}(\epsilon n^{-1/2})^{(1-(c-\delta))n}=b^{(c-\delta)n}\epsilon^{(1-(c-\delta))n} n^{\frac{2(c-\delta)-1}{2}n} .\]
Since $(2(c-\delta)-1)/2 >0$, the last term in the product dominates, so there exists $N_2$ such that $n\geq N_2$ implies that $ |\det(S(G))| \geq n-1$.

Therefore, for $n\geq \max(N_1,N_2)$, we have that
\[\begin{split} \P( |\det(S(G))| \geq n-1) & \geq \P\left( \frac{\#\{i : |\lambda_i|\geq b\sqrt{n}\}}{n} \geq  c-\delta, \min_i |\lambda_i| \geq \epsilon n^{-1/2}\right)\\
& \geq 1- \epsilon - C \epsilon^{1/9} - 2e^{-n^r}
.\end{split}\]
From this it follows immediately that $\P( |\det(S(G))| \geq n-1)  \to 1$.
\end{proof}

The argument we've given in fact proves the much stronger result.

\begin{theorem}
For every $0\leq \alpha <1/2$ we have $\P( |\det(S(G))| \geq n^{\alpha n})  \to 1$.
\end{theorem}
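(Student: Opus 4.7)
The plan is to rerun the proof of the previous theorem, but choose the cutoff $b$ much more aggressively so that the power of $n$ in the final lower bound exceeds $\alpha$. Recall that the estimate obtained there was
\[ |\det(S(G_n))| \geq b^{(c-\delta)n}\epsilon^{(1-(c-\delta))n} n^{\frac{2(c-\delta)-1}{2}n}, \]
so the exponent of $n$ equals $(c-\delta)-\tfrac{1}{2}$. It therefore suffices to arrange $c-\delta > \tfrac{1}{2}+\alpha$.

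Given $\alpha \in [0,1/2)$, I would first fix some $\beta \in (\alpha, 1/2)$. Since $\pi^{-1}\int_0^2\sqrt{4-x^2}\,dx = 1$ and the integrand is continuous and positive near $0$, I can choose $b>0$ small enough that $c := \pi^{-1}\int_b^2\sqrt{4-x^2}\,dx > \tfrac{1}{2}+\beta$, and then pick $\delta>0$ small enough that both $\delta < c - 1/2$ (as required by Wigner's law to be useful) and $c - \delta > \tfrac{1}{2}+\beta$ hold. Fix $\epsilon>0$ arbitrary.

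With these parameters, the two high-probability events used in the previous proof — the Wigner concentration estimate giving $\#\{i:|\lambda_i|\geq b\sqrt{n}\}/n \geq c-\delta$ with probability $\geq 1-\epsilon$ for $n\geq N_1$, and the Vershynin least-singular-value bound giving $\min_i|\lambda_i|\geq \epsilon n^{-1/2}$ with probability $\geq 1 - C\epsilon^{1/9} - 2e^{-n^r}$ — continue to hold verbatim. On their intersection the displayed lower bound yields
\[ |\det(S(G_n))| \geq b^{(c-\delta)n}\epsilon^{(1-(c-\delta))n} n^{\beta n}, \]
since $(c-\delta) - \tfrac{1}{2} \geq \beta$. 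The prefactor $b^{(c-\delta)n}\epsilon^{(1-(c-\delta))n}$ is at worst of the form $K^n$ for some constant $K=K(b,\epsilon)>0$, and $n^{\beta n}/K^n = (n^\beta / K)^n \to \infty$ super-exponentially, and in fact $n^{\beta n} / (K^n \cdot n^{\alpha n}) \to \infty$ since $\beta > \alpha$. Hence there exists $N_2$ such that for $n\geq N_2$ one has $|\det(S(G_n))| \geq n^{\alpha n}$ on this intersection.

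Combining, for $n\geq \max(N_1,N_2)$ we obtain $\P(|\det(S(G_n))|\geq n^{\alpha n}) \geq 1 - \epsilon - C\epsilon^{1/9} - 2e^{-n^r}$, and letting $n\to\infty$ then $\epsilon\to 0$ gives the claim. There is no real obstacle: the only new ingredient over the previous theorem is the elementary observation that $c$ can be pushed arbitrarily close to $1$ by shrinking $b$, which converts any target exponent $\alpha<1/2$ into an attainable one.
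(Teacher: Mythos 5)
Your proposal is correct and is exactly the argument the paper intends when it says ``the argument we've given in fact proves the much stronger result'': you rerun the proof of the first theorem, shrinking $b$ so that $c-\delta-\tfrac12>\alpha$, and note that $n^{((c-\delta)-1/2)n}$ dominates both the exponential prefactor $b^{(c-\delta)n}\epsilon^{(1-(c-\delta))n}$ and the target $n^{\alpha n}$. The only addition over the paper is that you have written out the parameter choices explicitly, which the paper leaves implicit.
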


It seems likely that the exact asymptotic growth of the determinant can be determined using very recent results from the random matrix literature \cite{BM18}, but note that Seidel matrices do not fit the definition of Wigner matrices in that paper and we have not pursued this direction.

\bibliographystyle{plain}
\bibliography{SeidelBib}

\end{document}